\newcommand*{\rom}[1]{\expandafter\@slowromancap\romannumeral #1@}
\newcommand{\RNum}[1]{\uppercase\expandafter{\romannumeral #1\relax}}
\newcommand{\N}{\mathbb{N}}  
\theoremstyle{plain} 
\newtheorem{thm}{Theorem}[section]
\newtheorem{lem}{Lemma}[section]
\newtheorem{pro}{Proposition}[section]
\newtheorem{qu}{Question}
\theoremstyle{definition} 
\newtheorem{defn}{Definition}[section]
\theoremstyle{remark} 
\newtheorem*{ack}{Acknowledgements}
\title{An inverse of Furstenberg's correspondence principle and applications to nice recurrence}
\author{Alexander Fish and Sean Skinner}
\address{School of Mathematics and Statistics, University of Sydney, Australia}
\curraddr{}
\email{alexander.fish@sydney.edu.au}
\email{sean.skinner@sydney.edu.au}
\thanks{}
\date{\today}                                           
\begin{document}
\begin{abstract}
We prove an inverse of Furstenberg's correspondence principle stating that for all measure preserving systems $(X,\mu,T)$ and $A\subset X$ measurable there exists a set $E \subset \N$ such that
\[ \mu\left( \bigcap_{i=1}^k T^{-n_i}A\right) = \lim_{N\to \infty} \frac{\left|\left( \bigcap_{i=1}^k (E-n_i) \right)\cap \{0,\dots,N-1\}\right|}{N}\]
for all $k,n_1,\dots,n_k \in \N$. As a corollary we show that a set $R\subset\N$ is a set of nice recurrence if and only if it is nicely intersective. Together, the inverse of Furstenberg's correspondence principle and it's corollary partially answer two questions of Moreira.
\end{abstract}

\maketitle
\section{\textbf{Introduction}}
Poincar\'{e}'s famous recurrence theorem says that for any measure preserving system $(X,\mu,T)$ and any $A\subset X$ measurable\footnote{We choose not to include the underlying $\sigma$-algebra in our notation and moving forward all subsets of a measurable space will be assumed to be measurable.} with $\mu(A)>0$ there exists a positive integer $r$ for which
\[\mu(A\cap T^{-r}A)>0.\]
A deeper result of Furstenberg \cite{Fur2} states that under the same hypotheses one can always find a positive integer $r$ for which
\begin{equation} \label{eq: Squares are set of recurrence}
\mu(A\cap T^{-r^2}A)>0.
\end{equation}
This phenomenon motivates the notion of a set of recurrence as introduced by Furstenberg.
\begin{defn}[Sets of recurrence]\label{Def: Recurrence}
A set $R\subset \N$ is called a set of recurrence if for any measure preserving system $(X,\mu,T)$ and any set $A\subset X$ with $\mu(A)>0$ there exists some non-zero $r\in R$ for which
\[ \mu(A \cap T^{-r} A) > 0.\]
\end{defn}
Thus Poincar\'{e}'s theorem states that the positive integers are a set of recurrence and the content of equation \eqref{eq: Squares are set of recurrence} states that the set of square numbers is a set of recurrence. Of course by now we know of many more examples of sets of recurrence\footnote{See for example \cite{Fur2} and \cite{KM}}, however in this paper we will be focused on the relationship between results in recurrence and their counterparts in density Ramsey theory. Independently to Furstenberg, Sark\"{o}zy \cite{Sa} proved that if $E\subset \N$ has positive upper density,
\begin{equation*}\label{eq: Upper density}
\overline{d}_{(\{0,\dots,N-1\})}(E) : = \limsup_{N\to \infty} \frac{|E\cap \{0,1,\dots,N-1\}|}{N}>0,
\end{equation*}
then there must exist some non-zero integer $r$ for which
\begin{equation} \label{eq: Squares are intersective}
E\cap (E-r^2) \neq \emptyset,
\end{equation}
or in other words, $E$ must contain two elements that differ by a perfect square. The above also holds when $(\{0,\dots,N-1\})$ is replaced by any other F{\o}lner sequence in $\N$.
\begin{defn}[F{\o}lner sequences]
A sequence $(F_N)$ of finite subsets of $\N$ is a F{\o}lner sequence if for any $t\in \N$ we have that
\begin{equation*}
\frac{|F_N \Delta  (F_N+t)|}{|F_N|} \to 0 \text{ as } N\to \infty.
\end{equation*}
For a F{\o}lner sequence $(F_N)$ and a set $E\subset \N$ we define the upper density of $E$ with respect to $(F_N)$ to be
\begin{equation*}
\overline{d}_{(F_N)}(E) := \limsup_{N\to \infty}\frac{|E\cap F_N|}{|F_N|}.
\end{equation*}
When the limit supremum is actually a limit we drop the overline and call $d_{(F_N)}(E)$ the density of $E$ with respect to $(F_N)$.
\end{defn}
\begin{defn}[Intersective Sets]
Let $(F_N)$ be a F{\o}lner sequence in $\N$ and let $R \subset \N$. $R$ is $(F_N)$-intersective if for every $E\subset \N$ with $\overline{d}_{(F_N)}(E)>0$ there exists some non-zero $r\in R$ with
\begin{equation*}
E\cap(E-r)\neq \emptyset.
\end{equation*}
\end{defn}
Even though one can easily verify that different F{\o}lner sequences can define genuinely different upper densities\footnote{Consider for example the sequence $(\{N^2+1,\dots,N^2+N\})$.}, it turns out that the corresponding notions of intersectivity are all equivalent, and in fact we have the following characterisation which is implicit in \cite{Fur2}, and explicit in \cite{Moreira}.
\begin{thm}[Characterisation of sets of recurrence]\label{Theorem: Characterising sets of recurrence}
Let $R\subset \N$. Then $R$ is a set of recurrence if and only if it is $(F_N)$-intersective for some F{\o}lner sequence $(F_N)$.
\end{thm}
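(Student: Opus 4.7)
The plan is to prove both directions directly: the forward direction by a standard application of Furstenberg's correspondence, and the reverse direction by an ergodic-theoretic contradiction argument.

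For $(\Rightarrow)$, I would assume $R$ is a set of recurrence and let $E\subset \N$ satisfy $\overline{d}_{(F_N)}(E) > 0$ for a given F{\o}lner sequence. After passing to a subsequence $(F_{N_k})$ along which the density limit is attained, the standard Furstenberg correspondence (realised as a weak-$*$ limit of the averages $\frac{1}{|F_{N_k}|}\sum_{n \in F_{N_k}} \delta_{T^n x}$ on $\{0,1\}^\Z$, with $x = 1_E$ and $A$ the cylinder on coordinate zero) produces a measure preserving system $(X,\mu,T)$ and a set $A\subset X$ with $\mu(A) = \overline{d}_{(F_N)}(E)$ and $\mu(A \cap T^{-r}A) \leq \overline{d}_{(F_N)}(E \cap (E-r))$ for every $r$. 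By recurrence of $R$, some $r \in R\setminus\{0\}$ gives $\mu(A\cap T^{-r}A) > 0$, whence $E\cap (E-r) \neq \emptyset$. In fact this shows the stronger statement that $R$ is $(F_N)$-intersective for \emph{every} F{\o}lner sequence.

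For $(\Leftarrow)$, I would argue by contradiction. Suppose $R$ is $(F_N)$-intersective but not a set of recurrence, so there exists a system $(X,\mu,T)$ and $A\subset X$ with $\mu(A)>0$ yet $\mu(A\cap T^{-r}A) = 0$ for every $r\in R\setminus\{0\}$. Define the modified set $A'' := A \setminus \bigcup_{r \in R\setminus\{0\}}(A \cap T^{-r}A)$. Since this subtracts a countable union of null sets, $\mu(A'') = \mu(A) > 0$, while for any $x \in A''$ and any $r \in R\setminus\{0\}$, the exclusion forces $T^r x \notin A$ and hence $T^r x \notin A''$. Consequently, for every $x \in X$, the return-time set $E_x := \{n \geq 0 : T^n x \in A''\}$ satisfies $E_x \cap (E_x - r) = \emptyset$ for all $r \in R\setminus\{0\}$, so exhibiting any $x$ with $\overline{d}_{(F_N)}(E_x) > 0$ will contradict the assumed $(F_N)$-intersectivity.

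The main obstacle is producing such an $x$, because pointwise ergodic theorems along general F{\o}lner sequences in $\N$ may fail in the absence of temperedness, so I cannot appeal directly to Birkhoff. My resolution uses only the mean ergodic theorem, which is valid for every F{\o}lner sequence: $\frac{1}{|F_N|}\sum_{n \in F_N} 1_{A''}\circ T^n$ converges in $L^2(\mu)$ to a nonnegative $T$-invariant function $f$ with $\int f\,d\mu = \mu(A'') > 0$. Extracting a subsequence $(F_{N_k})$ along which the convergence is $\mu$-almost everywhere, any $x$ in the positive-measure set $\{f > 0\}$ satisfies $\frac{|E_x \cap F_{N_k}|}{|F_{N_k}|} \to f(x) > 0$, whence $\overline{d}_{(F_N)}(E_x) \geq f(x) > 0$, completing the contradiction.
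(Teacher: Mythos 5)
Your proposal is correct, and it is worth noting that the paper does not actually prove Theorem \ref{Theorem: Characterising sets of recurrence} at all: it attributes the result to \cite{Fur2} and \cite{Moreira} and only remarks that the ``only if'' direction is immediate from Theorem \ref{Theorem: FCP}. Your forward direction is exactly that remark, carried out via the standard weak-$*$ limit construction (minor quibble: since $E\subset\N$ you should work in $\{0,1\}^{\N}$ with the one-sided shift, as the paper does elsewhere, rather than $\{0,1\}^{\Z}$ --- this changes nothing). The reverse direction is where you supply content the paper omits, and your argument is sound: removing the countable union of null sets $\bigcup_{r\in R\setminus\{0\}}(A\cap T^{-r}A)$ to get $A''$ with $\mu(A'')=\mu(A)$ and $R$-free return-time sets $E_x$ is the classical device, and your use of the mean ergodic theorem (valid along \emph{every} F{\o}lner sequence via the coboundary decomposition, since $\frac{1}{|F_N|}\bigl\|\sum_{n\in F_N}U^ng-\sum_{n\in F_N+1}U^ng\bigr\|\le\frac{|F_N\Delta(F_N+1)|}{|F_N|}\|g\|\to 0$) followed by extraction of an a.e.\ convergent subsequence correctly circumvents the failure of Birkhoff-type theorems along non-tempered F{\o}lner sequences; since $f\ge 0$ is the invariant limit with $\int f\,d\mu=\mu(A'')>0$, the set $\{f>0\}$ has positive measure and meets the full-measure set of subsequential convergence, producing the required point $x$ with $\overline{d}_{(F_N)}(E_x)\ge f(x)>0$. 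Together the two directions also yield the equivalence of $(F_N)$-intersectivity across all F{\o}lner sequences, which is precisely the point the paper makes just before stating the theorem.
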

Thus, the results referred to in equations \eqref{eq: Squares are set of recurrence} and \eqref{eq: Squares are intersective} are equivalent and jointly referred to as the Furstenberg-Sark\"{o}zy theorem.

The only if direction of Theorem \ref{Theorem: Characterising sets of recurrence} follows immediately from
Furstenberg's now ubiquitous correspondence principle.
\begin{thm}[Furstenberg's Correspondence Principle \cite{Fur2}] \label{Theorem: FCP}
Given any $E\subset \N$ and any F{\o}lner sequence $(F_N)$ there exists a measure preserving system $(X,\mu,T)$, a set $A\subset X$ with $\mu(A) = \overline{d}_{(F_N)}(E)$, and a subsequence $(N_i)$ such that
\[ \mu(A\cap T^{-n_1}A\cap \dots \cap T^{-n_k}A) = d_{(F_{N_i})}(E\cap (E-n_1)\cap \dots \cap (E-n_k)) \]
for all $k,n_1,\dots,n_k \in \N$.
\end{thm}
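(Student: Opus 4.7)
The natural approach is the classical shift-space realisation: I would encode $E$ as a point in a compact shift space and extract the desired invariant measure as a weak-$*$ limit of empirical measures along the F{\o}lner sequence. Take $X = \{0,1\}^{\N}$ with the product topology (a compact metrisable space), let $T \colon X \to X$ be the left shift $(Tx)(n) = x(n+1)$, set $\xi = \mathbf{1}_E \in X$, and let $A = \{x \in X : x(0) = 1\}$, a clopen ``zero-coordinate'' cylinder. The whole argument hinges on the elementary pointwise identity
\[
\mathbf{1}_{A \cap T^{-n_1}A \cap \dots \cap T^{-n_k}A}(T^n \xi) \;=\; \mathbf{1}_E(n)\,\mathbf{1}_E(n + n_1) \cdots \mathbf{1}_E(n + n_k),
\]
which translates cylinder measurements along the orbit of $\xi$ into counts inside $E \cap (E - n_1) \cap \dots \cap (E - n_k)$.

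Next I would form the empirical measures
\[
\mu_N \;:=\; \frac{1}{|F_N|}\sum_{n \in F_N} \delta_{T^n \xi}
\]
on $X$. First pass to a sub-sequence along which $|E \cap F_N|/|F_N|$ converges to $\overline{d}_{(F_N)}(E)$; then, by weak-$*$ compactness of the space of Borel probability measures on the compact metric space $X$, pass to a further sub-sequence $(N_i)$ along which $\mu_{N_i}$ converges weakly to some Borel probability measure $\mu$. The candidate system is $(X, \mu, T)$ together with the set $A$ defined above.

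Three verifications remain. (a) \emph{Invariance of $\mu$}: for any $f \in C(X)$,
\[
\left| \int f\, d\mu_N - \int (f \circ T)\, d\mu_N \right| \;\leq\; \frac{2\|f\|_\infty\, |F_N \,\Delta\, (F_N - 1)|}{|F_N|} \;\longrightarrow\; 0
\]
by the F{\o}lner property, so the limit measure satisfies $\int f\, d\mu = \int (f \circ T)\, d\mu$ for every continuous $f$, hence is $T$-invariant. (b) \emph{The density identity}: each set $A \cap T^{-n_1}A \cap \dots \cap T^{-n_k}A$ is clopen, hence a continuity set for $\mu$, so the Portmanteau theorem upgrades the weak convergence $\mu_{N_i} \to \mu$ to equality of measures on these sets; combined with the pointwise identity above, this delivers the claimed equation and simultaneously shows that each density $d_{(F_{N_i})}(E \cap (E - n_1) \cap \dots \cap (E - n_k))$ actually exists along $(N_i)$. (c) \emph{Mass of $A$}: this is the $k = 0$ case of (b), together with the preliminary choice of sub-sequence that computes the $\limsup$.

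The main obstacle here is really just organisational: we need all countably many relevant density limits to exist along a \emph{single} sub-sequence $(N_i)$. This is handled for free by weak-$*$ convergence $\mu_{N_i} \to \mu$ on the metrisable compact space $X$, since every finite intersection of cylinders is clopen and therefore a continuity set for any Borel measure. Beyond that, the proof reduces to the routine manipulations sketched above.
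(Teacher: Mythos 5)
Your argument is correct and complete: the encoding of $E$ as the point $\mathbf{1}_E$ in the shift space, the two-stage subsequence extraction (first to realise the $\limsup$ defining $\overline{d}_{(F_N)}(E)$, then weak-$*$ compactness), the F{\o}lner estimate for invariance, and the observation that cylinders are clopen and hence continuity sets are exactly the standard proof. The paper itself does not prove this theorem --- it is quoted from Furstenberg \cite{Fur2} --- but your argument uses precisely the machinery the paper deploys elsewhere (the shift space $\{0,1\}^{\N}$, cylinder sets, and weak-$*$ limits in $\mathcal{M}$, as in Lemma \ref{Lemma: Universality of shift space} and the proofs of the uniformity theorems), so it is entirely consistent with the paper's framework.
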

In 1986 Bergelson introduced a different notion of recurrence and intersectivity as part of his proof of a density version of Schur's theorem\cite{B}.
\begin{defn}[Nice Recurrence] \label{Def: Nice Recurrence}
A set $R\subset \N$ is called a set of nice recurrence (c.f. \cite{B}) if for any measure preserving system $(X,\mu,T)$, any $A\subset X$, and any $\varepsilon>0$ there exists some non-zero $r\in R$ such that
\[\mu(A\cap T^{-r}A)>\mu(A)^2 -\varepsilon.\]
\end{defn}
Note that for any strong mixing system $\mu(A\cap T^{-n}A) \to \mu(A)^2$ as $n\to \infty$ and so the lower bound in  Definition \ref{Def: Nice Recurrence} is in this sense optimal. Nearly all known examples of sets of recurrence (including the squares) are also sets of nice recurrence, however in \cite{ForThesis} Forrest constructed a set of recurrence which is not a set of nice recurrence.
\begin{defn}[Nicely Intersective Sets] \label{Def: Nicely Intersective Sets}
Let $(F_N)$ be a F{\o}lner sequence in $\N$ and let $R \subset \N$. We call $R$ a $(F_N)$-nicely intersective set (also referred to as a set of \textit{nice combinatorial recurrence} in \cite{B} for the case $(F_N)=(\{1,\dots,N\})$) if for any set $E\subset \N$ and any $\varepsilon>0$ there exists some non-zero $r\in R$ such that
\[\overline{d}_{(F_N)}(E\cap (E-r)) > \overline{d}_{(F_N)}(E)^2 -\varepsilon.\]
\end{defn}
It is natural to ask whether a characterisation similar to Theorem \ref{Theorem: Characterising sets of recurrence} holds for sets of nice recurrence. Indeed Theorem \ref{Theorem: FCP} immediately implies that for any F{\o}lner sequence $(F_N)$, any set of nice recurrence $R$ is $(F_N)$-nicely intersective. A question of Moreira in \cite{Moreira} asks if the converse is also true.
\begin{qu}\label{Question: Characterising nice recurrence}
Is it true that a set $R\subset \N$ is a set of nice recurrence if and only if it is $(F_N)$-nicely intersective for some F{\o}lner sequence $(F_N)$?
\end{qu}
We provide a partial answer to this question.
\begin{thm}\label{Theorem: Characterisation of sets of nice recurrence}
A set $R\subset \N$ is a set of nice recurrence if and only if it is $(\{0,\dots,N-1\})$-nicely intersective.
\end{thm}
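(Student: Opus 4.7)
The plan is to deduce Theorem \ref{Theorem: Characterisation of sets of nice recurrence} as an essentially immediate consequence of the inverse correspondence principle announced in the abstract, which handles everything nontrivial. The forward implication (nice recurrence $\Rightarrow$ $(\{0,\dots,N-1\})$-nicely intersective) is routine: given $E \subset \N$ with $\overline{d}_{(\{0,\dots,N-1\})}(E) = d > 0$ and $\varepsilon > 0$, apply Theorem \ref{Theorem: FCP} with the F\o lner sequence $(\{0,\dots,N-1\})$ to produce a system $(X,\mu,T)$, a set $A$ with $\mu(A) = d$, and a subsequence $(N_i)$ along which the two-fold density exists and agrees with $\mu(A \cap T^{-r}A)$ for every $r$. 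Applying the nice recurrence property of $R$ to $(X,\mu,T,A)$ yields an $r \in R \setminus \{0\}$ with $\mu(A \cap T^{-r}A) > \mu(A)^2 - \varepsilon$, and the density along $(N_i)$ is bounded above by the upper density along $(\{0,\dots,N-1\})$, so $\overline{d}_{(\{0,\dots,N-1\})}(E \cap (E-r)) > \overline{d}_{(\{0,\dots,N-1\})}(E)^2 - \varepsilon$.

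For the reverse implication, suppose $R$ is $(\{0,\dots,N-1\})$-nicely intersective, and let $(X,\mu,T)$ be a measure preserving system with $A \subset X$ measurable and $\varepsilon > 0$. The plan is to feed $(X,\mu,T,A)$ into the inverse of Furstenberg's correspondence principle to obtain a set $E \subset \N$ satisfying
\[ \mu\left( \bigcap_{i=1}^k T^{-n_i}A\right) = \lim_{N\to \infty} \frac{\left|\left( \bigcap_{i=1}^k (E-n_i) \right)\cap \{0,\dots,N-1\}\right|}{N}\]
for all $k, n_1,\dots,n_k \in \N$. Specializing to $k=1$, $n_1=0$ gives $\mu(A) = d_{(\{0,\dots,N-1\})}(E)$, and specializing to $k=2$, $(n_1,n_2)=(0,r)$ gives $\mu(A \cap T^{-r}A) = d_{(\{0,\dots,N-1\})}(E \cap (E-r))$, with both densities existing as genuine limits. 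Applying the hypothesis that $R$ is $(\{0,\dots,N-1\})$-nicely intersective to this $E$ produces a non-zero $r \in R$ with $\overline{d}_{(\{0,\dots,N-1\})}(E \cap (E-r)) > \overline{d}_{(\{0,\dots,N-1\})}(E)^2 - \varepsilon$. Since both densities in question are limits, translating back through the inverse correspondence yields $\mu(A \cap T^{-r}A) > \mu(A)^2 - \varepsilon$, which is exactly the defining property of nice recurrence.

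The only real obstacle is the inverse correspondence principle itself; conditional on it, the entire proof of Theorem \ref{Theorem: Characterisation of sets of nice recurrence} is a two-line unpacking of definitions. The crucial features we are exploiting are (i) that the inverse principle produces a \emph{single} set $E$ whose densities reproduce $\mu(A \cap T^{-r}A)$ for \emph{all} $r$ simultaneously, not just along a subsequence depending on $r$, and (ii) that the resulting densities are limits rather than limsups, so that the strict inequality provided by the nicely intersective hypothesis transfers without loss through the correspondence. Consequently the proof should consist of two short paragraphs, one for each direction, with no further analysis required beyond invoking the main theorem.
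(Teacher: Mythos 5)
Your proposal is correct and follows essentially the same route as the paper: the reverse direction is the two-line application of the inverse correspondence principle (using that it produces a single $E$ with genuine limit densities for all shifts simultaneously), and the forward direction is the standard application of Furstenberg's correspondence principle with the density along the subsequence bounded above by the upper density. The only cosmetic difference is that you restrict the forward direction to $\overline{d}(E)>0$, which is unnecessary since the case of zero density is vacuous, but this costs nothing.
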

In particular, by Theorem \ref{Theorem: FCP} we have that any $(\{0,\dots,N-1\})$-nicely intersective set is also $(F_N)$-nicely intersective set for all F{\o}lner sequences $(F_N)$. 
Moreira also observed that an affirmative answer to Question \ref{Question: Characterising nice recurrence} would follow from an affirmative answer to the following question.
\begin{qu}\label{Question: Inverse CP}
Let $(X,\mu,T)$ be a measure preserving system and let $A\subset X$. Is it true that for any F{\o}lner sequence $(F_N)$ there is some set $E\subset \N$ such that for every $n\in \N$ we have that
\[\mu(A\cap T^{-n} A) = \overline{d}_{(F_N)}(E\cap(E-n)).\]
\end{qu}
Indeed our proof of Theorem \ref{Theorem: Characterisation of sets of nice recurrence} follows from the following partial answer to Question \ref{Question: Inverse CP}.
\begin{thm} \label{Theorem: Inverse CP}
Given any measure preserving system $(X,\mu,T)$ and any set $A\subset X$ there exists some $E\subset \N$ such that
\[ \mu\left( \bigcap_{i=1}^k T^{-n_i}A\right) = \lim_{N\to \infty} \frac{\left|\left( \bigcap_{i=1}^k (E-n_i) \right)\cap \{0,\dots,N-1\}\right|}{N}\]
for any $k,n_1,\dots,n_k \in \N$.
\end{thm}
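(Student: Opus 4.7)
The plan is to convert the measure-theoretic data into statistics of a single orbit in a symbolic system via a generic-point construction, which is why passing to the full shift is essential.

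First, I would reduce the problem to the full shift over $\{0,1\}$. Define the measurable map $\phi: X \to \{0,1\}^{\N}$ by $\phi(x)(n) = \mathbf{1}_A(T^n x)$. Since $\phi \circ T = S \circ \phi$, where $S$ denotes the shift on $\{0,1\}^{\N}$, the pushforward $\nu := \phi_*\mu$ is a shift-invariant probability measure. Writing $B = \{y : y(0) = 1\}$, the identity $\mathbf{1}_B \circ S^n \circ \phi = \mathbf{1}_A \circ T^n$ yields $\mu(\bigcap_i T^{-n_i}A) = \nu(\bigcap_i S^{-n_i} B)$ for all $k, n_1, \dots, n_k$. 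Thus it suffices to prove the theorem for the symbolic system $(\{0,1\}^{\N}, \nu, S)$ with $A = B$.

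Second, I would reformulate the conclusion as the existence of a generic point for $\nu$: namely, a point $y^* \in \{0,1\}^\N$ such that the empirical measures $\frac{1}{N}\sum_{n=0}^{N-1} \delta_{S^n y^*}$ converge weak-$*$ to $\nu$. Setting $E = \{n \geq 0 : y^*(n) = 1\}$, and observing that the indicator of the cylinder $C = \bigcap_i S^{-n_i} B$ is a continuous function on $\{0,1\}^\N$ equal to $\prod_i y(n_i)$, weak-$*$ convergence evaluated at $\mathbf{1}_C$ gives exactly
\[
\lim_N \frac{|\bigcap_i (E - n_i) \cap \{0,\dots,N-1\}|}{N} = \nu(C) = \mu\left(\bigcap_i T^{-n_i} A\right).
\]

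Third, I would construct such a $y^*$ using the ergodic decomposition $\nu = \int \nu_\omega \, d\rho(\omega)$. By Birkhoff's ergodic theorem applied to each ergodic component, $\nu_\omega$-a.e. point is a generic point for $\nu_\omega$. The point $y^*$ is then built by concatenating long finite blocks drawn from generic sequences of the components, with block-type frequencies that approximate $\rho$ weakly, diagonalizing over a countable dense family of cylinders to force convergence of the full empirical distribution. The main obstacle is precisely this concatenation-and-diagonalization: the block lengths $L_k$ must be chosen large enough relative to the complexity of the cylinders being controlled so that finite-length statistics of each block are close to those of its source measure (absorbing the $O(1)$ boundary mismatch between consecutive blocks), while the assignment of block types must cause the cumulative empirical measure, viewed through finitely many cylinders at a time, to converge to $\nu$. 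Using the compactness and metrizability of the space of invariant measures on $\{0,1\}^\N$, this staged construction produces the required generic $y^*$ and completes the proof.
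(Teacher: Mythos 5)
Your proposal is correct and follows essentially the same route as the paper: reduce to the full shift via $\phi(x)_n = 1_A(T^n x)$, then produce a $(\{0,\dots,N-1\})$-generic point for the pushforward measure by concatenating long blocks taken from generic points of finitely many ergodic measures whose convex combination approximates $\nu$ on the cylinders under control, which is exactly the paper's Lemma on universality of the shift space combined with its proof of Sigmund's theorem. The only cosmetic differences are that you invoke the ergodic decomposition where the paper uses the Krein--Milman theorem to obtain the finitely many ergodic approximants, and the paper interposes a periodic approximant (repeating each concatenated super-block) to simplify the bookkeeping at block boundaries.
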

In a slightly different direction, it is also known that sets of recurrence admit an equivalent uniform characterisation.
\begin{thm}[Uniformity of recurrence \cite{ForThesis}]\label{Theorem: Uniformity of recurrence}
Let $R\subset \N$ be a set of recurrence. Then for any $\delta>0$ there exists some positive integer $N_0=N_0(R,\delta)$ and $\varepsilon = \varepsilon(R,\delta)$ such that for any measure preserving system $(X,\mu,T)$ and any $A\subset X$ with $\mu(A)\geq \delta$ there exists some $r\in R \cap \{1,\dots,N_0\}$ with \[\mu(A\cap T^{-r}A)\geq \varepsilon.\]
\end{thm}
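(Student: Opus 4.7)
The plan is to argue by contradiction using a weak-$*$ compactness argument on a common symbolic space. Suppose the conclusion fails: then there exists $\delta > 0$ such that for every $N$ we can find a measure preserving system $(X_N, \mu_N, T_N)$ and a set $A_N \subset X_N$ with $\mu_N(A_N) \geq \delta$ while $\mu_N(A_N \cap T_N^{-r} A_N) < 1/N$ for every $r \in R \cap \{1, \dots, N\}$. The task is to assemble these counterexamples into a single measure preserving system that witnesses that $R$ is not a set of recurrence.

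The main step is to transport every $(X_N, \mu_N, T_N, A_N)$ into a common compact model. Let $Y := \{0,1\}^{\N}$ be equipped with the product topology and left shift $\sigma$, and set $A := \{y \in Y : y_0 = 1\}$. For each $N$ define $\pi_N : X_N \to Y$ by $\pi_N(x)_n := \mathbf{1}_{A_N}(T_N^n x)$; then $\sigma \circ \pi_N = \pi_N \circ T_N$, so the pushforward $\nu_N := (\pi_N)_* \mu_N$ is a $\sigma$-invariant Borel probability on $Y$ satisfying $\nu_N(A \cap \sigma^{-r} A) = \mu_N(A_N \cap T_N^{-r} A_N)$ for every $r$. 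Since $Y$ is compact and metrizable, the space of Borel probability measures on $Y$ is weak-$*$ compact and shift-invariance is preserved under weak-$*$ limits; so one can extract a subsequence $(N_k)$ with $\nu_{N_k} \to \nu$ weakly for some $\sigma$-invariant probability measure $\nu$ on $Y$.

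Finite intersections $\bigcap_i \sigma^{-n_i} A$ are clopen cylinder sets, so $\nu_{N_k}(B) \to \nu(B)$ for each such $B$. In particular $\nu(A) = \lim_k \mu_{N_k}(A_{N_k}) \geq \delta$, while for every fixed non-zero $r \in R$ we obtain $\nu(A \cap \sigma^{-r} A) = \lim_k \mu_{N_k}(A_{N_k} \cap T_{N_k}^{-r} A_{N_k}) = 0$, since $\mu_{N_k}(A_{N_k} \cap T_{N_k}^{-r} A_{N_k}) < 1/N_k$ as soon as $N_k \geq r$. Applying the definition of set of recurrence to $(Y, \nu, \sigma)$ and $A$ then produces a non-zero $r \in R$ with $\nu(A \cap \sigma^{-r} A) > 0$, the desired contradiction. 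The only real obstacle to anticipate is that the spaces $X_N$ are unrelated, so no direct limit of the systems is available; the symbolic factor map $\pi_N$ resolves this by forcing every system into a single compact model where weak-$*$ compactness applies, after which tracking the correlations $\mu_N(A_N \cap T_N^{-r} A_N)$ through the embedding is trivial because each $A \cap \sigma^{-r} A$ is clopen.
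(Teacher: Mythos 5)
Your argument is correct and is essentially the same as the paper's: both negate the statement, push each counterexample system forward to a $\sigma$-invariant measure on $\{0,1\}^{\N}$ via the map $x \mapsto (1_{A}(T^{n}x))_{n}$ (the paper's Lemma \ref{Lemma: Universality of shift space}), extract a weak-$*$ limit, and read off a contradiction with $R$ being a set of recurrence using the cylinder $[\omega_{0}=1]$. The only cosmetic difference is that you justify convergence on cylinders via their clopenness rather than by the paper's definition of the weak-$*$ topology through cylinders; this changes nothing.
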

An analogous result also holds for sets of nice recurrence.
\begin{thm}[Uniformity of nice recurrence]\label{Theorem: Uniformity of nice recurrence}
Let $R\subset \N$ be a set of nice recurrence. Then for any $\delta,\varepsilon>0$ there exists some positive integer $N_0=N_0(R,\delta,\varepsilon)$ such that for any measure preserving system $(X,\mu,T)$ and any $A\subset X$ with $\mu(A)\geq \delta$ there exists some $r \in R \cap \{1,\dots,N_0\}$ with \[\mu(A\cap T^{-r} A)>\delta^2 - \varepsilon.\]
\end{thm}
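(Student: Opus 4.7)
The plan is to argue by contradiction via a standard symbolic-limit/weak-$*$ compactness construction, without directly invoking Theorem \ref{Theorem: Inverse CP}. Suppose the conclusion fails: there exist $\delta,\varepsilon>0$ and, for each $n\in \N$, a measure preserving system $(X_n,\mu_n,T_n)$ and a set $A_n\subset X_n$ with $\mu_n(A_n)\geq \delta$ such that
\[ \mu_n(A_n\cap T_n^{-r}A_n)\leq \delta^2-\varepsilon \qquad \text{for every nonzero } r\in R\cap \{1,\dots,n\}. \]
The idea is to combine these counterexamples into a single offending system by passing to their symbolic models and taking a weak-$*$ limit.

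Let $\Omega = \{0,1\}^{\N}$ carry the product topology and the left shift $\sigma$, and for each $n$ define the itinerary map $\phib_n:X_n\to\Omega$ by $\phib_n(x)_k = 1_{A_n}(T_n^k x)$. Then $\nu_n:=(\phib_n)_*\mu_n$ is a $\sigma$-invariant Borel probability measure on $\Omega$, and for the clopen cylinder $C_0=\{\omega:\omega_0=1\}$ one has $\nu_n(C_0) = \mu_n(A_n) \geq \delta$ and $\nu_n(C_0\cap \sigma^{-r}C_0)=\mu_n(A_n\cap T_n^{-r}A_n)$ for all $n$ and $r$.

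The space of Borel probability measures on the compact metrisable space $\Omega$ is weak-$*$ compact, so after passing to a subsequence I may assume $\nu_n\to\nu$ weakly. The limit $\nu$ inherits shift-invariance, and since $C_0$ and $C_0\cap\sigma^{-r}C_0$ are clopen (hence their indicator functions are continuous on $\Omega$), weak-$*$ convergence yields $\nu(C_0)\geq\delta$ and, for every fixed nonzero $r\in R$,
\[ \nu(C_0\cap\sigma^{-r}C_0) = \lim_{n\to\infty}\nu_n(C_0\cap\sigma^{-r}C_0) \leq \delta^2-\varepsilon, \]
the bound being legitimate because it holds along the tail $n\geq r$.

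To conclude, apply the nice recurrence of $R$ to the measure preserving system $(\Omega,\nu,\sigma)$ with the set $C_0$ and parameter $\varepsilon/2$: this produces a nonzero $r\in R$ with
\[ \nu(C_0\cap\sigma^{-r}C_0) > \nu(C_0)^2-\varepsilon/2 \geq \delta^2-\varepsilon/2, \]
contradicting the uniform upper bound $\delta^2-\varepsilon$. The only point requiring genuine care is the transfer of the uniform upper bound to the symbolic limit, but this is routine since we are only tracking clopen cylinders defined by finitely many coordinates; the main conceptual move is recognising that a failure of uniformity assembles into a single limiting system in which $R$ itself fails to witness nice recurrence for the cylinder $C_0$.
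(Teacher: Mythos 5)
Your proposal is correct and follows essentially the same route as the paper: a contradiction argument that pushes each counterexample system forward to the shift space $\{0,1\}^\N$ via the itinerary map (the paper's Lemma \ref{Lemma: Universality of shift space}), extracts a weak-$*$ limit of the resulting $\sigma$-invariant measures, and applies nice recurrence of $R$ to the limiting system with the cylinder $[\omega_0=1]$. The only cosmetic differences are that you justify convergence on cylinders via clopenness rather than the paper's cylinder-based definition of weak-$*$ convergence, and you use the slack parameter $\varepsilon/2$ where the paper simply notes the limit inequality $\nu(B\cap\sigma^{-r}B)\leq\delta^2-\varepsilon$ directly contradicts nice recurrence.
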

Although there are now at least 4 different proofs of Theorem \ref{Theorem: Uniformity of recurrence} in the literature (c.f. \cite{ForThesis,FraKra,BHMP,FLW,BH,Ack}), the scrupulous reader can verify that only the arguments of Forrest's original proof also suffice to prove Theorem \ref{Theorem: Uniformity of nice recurrence}. To our best knowledge this is the first time that Theorem \ref{Theorem: Uniformity of nice recurrence} has appeared in print, and in any case, we provide a new and simpler proof of both Theorem \ref{Theorem: Uniformity of recurrence} and Theorem \ref{Theorem: Uniformity of nice recurrence}.

\begin{ack}
We are grateful to John Griesmer for useful discussions on the topic of this paper.
A. Fish was supported by the ARC via grants DP210100162
and DP240100472.
\end{ack}
\section{\textbf{Proof of theorem \ref{Theorem: Characterisation of sets of nice recurrence}}}
For convenience let us denote $(G_N):=(\{0,\dots,N-1\})$.
To prove the 'if' direction of Theorem \ref{Theorem: Characterisation of sets of nice recurrence} suppose that $R\subset \N$ is a $(G_N)$-nicely intersective set and let $(X,\mu,T)$ be a measure preserving system with $A\subset X$. By Theorem \ref{Theorem: Inverse CP} there exists some $E\subset \N$ with
\[d_{(G_N)}(E\cap (E-t)) = \mu(A\cap T^{-t}) \quad \text{for all }t \in \N.\]
In particular $d_{(G_N)}(E)=\mu(A)$ and since $R$ is a $(G_N)$-nicely intersective set then for every $\varepsilon>0$ there exists some non-zero $r\in R$ with
\[\mu(A)^2-\varepsilon=d_{(G_N)}(E)^2-\varepsilon < d_{(G_N)}(E\cap(E-r)) = \mu(A\cap T^{-r}A)\]
as required. 

On the other hand is $R\subset \N$ is a set of nice recurrence and $E\subset \N$, then for any F{\o}lner sequence $(F_N)$ Theorem \ref{Theorem: FCP} ensures the existence of a measure preserving system $(X,\mu,T)$ and a set $A \subset X$ with $\mu(A) = \overline{d}_{(F_N)}(E)$ and 
\[\mu(A\cap T^{-t}A)\leq \overline{d}_{(F_N)}(E \cap(E-t)) \quad \text{for any }t\in \N.\]
Since $R$ is a set of nice recurrence then for every $\varepsilon>0$ there exists some non-zero $r\in R$ with
\[ \overline{d}_{(F_N)}(E)^2 - \varepsilon=\mu(A)^2 -\varepsilon < \mu(A\cap T^{-r}A) \leq \overline{d}_{(F_N)}(E \cap(E-r))\]
as required.
\section{\textbf{Proof of the inverse correspondence principle}}
In what follows the space $\{0,1\}^\N$ of binary valued sequences indexed by $\N$ will always be equipped with the product topology (where $\{0,1\}$ is discrete), the Borel $\sigma$-algebra and the right shift $\sigma: \{0,1\}^\N \to \{0,1\}^\N$ defined by
\[(\sigma \omega )_n = \omega_{n+1} \text{ for any }\omega \in \{0,1\}^\N \text{ and any } n\in \N.\]
For any $r, n_1,\dots,n_r \in \N$ and $i_1,\dots,i_r \in \{0,1\}$ we denote by
\[[\omega_{n_1} = i_1,\dots,\omega_{n_r}=i_r]\]
the set of all sequences $\omega \in \{0,1\}^\N$ with $\omega_{n_j} = i_j$ for each $j=1,\dots,r$. Such a set is called a cylinder. 
Recall that the collection of cylinders forms a generating algebra for the Borel $\sigma$-algebra on $\{0,1\}^\N$. Denote by $\mathcal{M}$ the compact space of all Borel probability measures on $\{0,1\}^\N$ equipped with the topology of weak-* convergence, which in this setting means that a sequence of measures $\nu_n \in \mathcal{M}$ converges to some $\nu \in \mathcal{M}$ if and only if
\begin{equation*}
\nu_n(C) \to \nu(C) \quad \text{for all cylinders } C.
\end{equation*}
Denote by $\mathcal{M}_\sigma$ the set of all $\sigma$-invariant measures in $\mathcal{M}$.
We begin with the well known observation that $(\{0,1\}^\N,\sigma)$ is universal for all measure preserving systems $(X,\mu,T)$ with a distinguished set $A\subset X$ in the following sense.
\begin{lem}[\cite{Tao}{[Example 2.2.6]}]\label{Lemma: Universality of shift space}
Let $(X,\mu,T)$ be a measure preserving system. Given any $A\subset X$ there exists some $\nu \in \mathcal{M}_\sigma$ such that for any $r, n_1,\dots, n_r \in \N$ and $i_1,\dots,i_r \in \{0,1\}$,
\[\nu([\omega_{n_1} = i_1,\dots, \omega_{n_r} = i_r]) = \mu(T^{-n_1} A^{i_1} \cap \dots \cap T^{-n_r} A^{i_r}),\]
where $A^1:=A$ and $A^{0}:=A^c$.
\end{lem}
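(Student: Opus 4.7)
The plan is to construct $\nu$ as the pushforward of $\mu$ under a symbolic coding map. Define $\phi : X \to \{0,1\}^\N$ by setting $\phi(x)_n := \mathbf{1}_A(T^n x)$, and let $\nu := \phi_* \mu$ be the pushforward measure on $\{0,1\}^\N$. The map $\phi$ is measurable because the preimage of any cylinder $[\omega_{n_1}=i_1,\dots,\omega_{n_r}=i_r]$ is the finite intersection $\bigcap_{j=1}^r T^{-n_j} A^{i_j}$, which lies in the $\sigma$-algebra on $X$, and cylinders generate the Borel $\sigma$-algebra on $\{0,1\}^\N$.

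Next I would verify that $\nu \in \mathcal{M}_\sigma$. The key point is the pointwise intertwining identity $\sigma \circ \phi = \phi \circ T$, which unwinds as
\[(\sigma \phi(x))_n = \phi(x)_{n+1} = \mathbf{1}_A(T^{n+1}x) = \mathbf{1}_A(T^n(Tx)) = \phi(Tx)_n.\]
Combined with $T_*\mu = \mu$, this yields $\sigma_* \nu = \sigma_* \phi_* \mu = \phi_* T_* \mu = \phi_* \mu = \nu$, so $\nu$ is shift-invariant.

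The cylinder formula is then immediate: for any cylinder $C = [\omega_{n_1}=i_1,\dots,\omega_{n_r}=i_r]$ the computation above gives
\[\nu(C) = \mu(\phi^{-1}(C)) = \mu\!\left(\bigcap_{j=1}^r T^{-n_j}A^{i_j}\right),\]
which is exactly the claimed equality. There is no real obstacle to this argument — it is the standard symbolic factor construction — and the only mildly delicate point is confirming measurability of $\phi$, which reduces to the observation that each coordinate projection pulls back to the measurable function $\mathbf{1}_A \circ T^n$.
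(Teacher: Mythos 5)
Your proof is correct and follows essentially the same route as the paper: the same coding map $\phi(x)_n = 1_A(T^n x)$, the same pushforward construction, and the same cylinder-preimage computation. In fact you are slightly more thorough than the paper, which asserts $\nu \in \mathcal{M}_\sigma$ without writing out the intertwining identity $\sigma \circ \phi = \phi \circ T$ that you verify explicitly.
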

\begin{proof}
Define a map $\phi: X \to \{0,1\}^\N$ by
\[\phi(x)_n: = 1_A(T^n x) \text{ for any } n\in\N.\]
The map $\phi$ is measurable and so we can define $\nu$ to be the pushforward of $\mu$ under $\phi$, i.e. $\nu(B):= \mu(\phi^{-1}(B))$ for any $B\subset \{0,1\}^\N$ measurable.  For any cylinder $[\omega_{n_1} = i_1,\dots, \omega_{n_r} = i_r]$ we have that
\begin{align*}
\phi^{-1}([\omega_{n_1} = i_1,\dots, \omega_{n_r} = i_r]) &= \{ x\in X \, | \, T^{n_j}x \in A^{i_j} \text{ for each } j=1,\dots ,r\}\\
&=T^{-n_1} A^{i_1} \cap \dots \cap T^{-n_r} A^{i_r}
\end{align*}
and so the result follows.
\end{proof}
\begin{defn}
Let $(F_N)$ be a F{\o}lner sequence.
A point $x\in \{0,1\}^\N$ is called $(F_N)$-generic for a measure $\nu \in \mathcal{M}_\sigma$ if
\begin{equation*}
\delta_{F_N}(x):= \frac{1}{|F_N|}\sum_{i\in F_N} \delta_{\sigma^i(x)} \to \nu \quad \text{as } N \to \infty,
\end{equation*}
where $\delta_{\sigma^i(x)}$ is the Dirac mass at $\sigma^i(x)$.
\end{defn}

Theorem \ref{Theorem: Inverse CP} now follows from Lemma \ref{Lemma: Universality of shift space} combined with the following classical result of Sigmund.
\begin{pro}[\cite{Sigmund}{[Theorem 4]}] \label{Prop: Sigmund Theorem}
Every $\nu \in \mathcal{M}_\sigma$ admits a $(\{0,\dots,N-1\})$-generic point.
\end{pro}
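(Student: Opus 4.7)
The plan is to show that the set $G := \{\nu \in \mathcal{M}_\sigma : \nu \text{ admits a } (\{0,\ldots,N-1\})\text{-generic point}\}$ coincides with $\mathcal{M}_\sigma$. I would verify three properties of $G$: it contains every ergodic invariant measure, it is convex, and it is closed in the weak-$*$ topology. Since $\mathcal{M}_\sigma$ is a metrizable compact convex subset of the dual of $C(\{0,1\}^\N)$ whose extreme points are precisely the ergodic measures, Krein--Milman then forces $G = \mathcal{M}_\sigma$.

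For the ergodic case, I would use that $C(\{0,1\}^\N)$ is separable and every cylinder is clopen, so that weak-$*$ convergence $\delta_{F_N}(x) \to \nu$ is equivalent to $\delta_{F_N}(x)(C) \to \nu(C)$ holding on each of the countably many cylinders $C$. For $\nu$ ergodic, Birkhoff's pointwise ergodic theorem applied to every cylinder indicator then produces a $\nu$-conull set of generic points; hence every ergodic measure lies in $G$.

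For closedness, given $\nu_k \to \nu$ in $\mathcal{M}_\sigma$ with generic points $x^{(k)}$ for each $\nu_k$, I would construct $z \in \{0,1\}^\N$ by concatenating initial segments of $x^{(k)}$ of lengths $L_k$, chosen so that $(L_1+\ldots+L_{k-1})/L_k \to 0$ and so that the time-$L_k$ empirical measures of $x^{(k)}$ are within $1/k$ of $\nu_k$ on all cylinders of length $\leq k$. Writing $M_k = L_1+\ldots+L_k$ and fixing a cylinder $C$ of length $m$, the count of $i < M_k$ with $\sigma^i z \in C$ agrees up to $O(km)$ boundary errors (from positions straddling block joins) with $\sum_{j \leq k} L_j (\nu_j(C) + o(1))$; domination of $L_k$ makes $\delta_{F_{M_k}}(z)(C)$ essentially $\nu_k(C)$, which tends to $\nu(C)$. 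Convexity of $G$ follows by the analogous construction using alternating blocks from $x^{(1)}$ and $x^{(2)}$ in the ratio $\alpha : (1-\alpha)$.

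The main obstacle is the quantitative calibration of the block lengths $L_k$ needed to ensure $\delta_{F_N}(z)(C) \to \nu(C)$ at every $N$---not just along the subsequence $(M_k)$---and uniformly over cylinders. At intermediate $N \in [M_{k-1}, M_k)$ the empirical measure decomposes into a full-block contribution from earlier blocks plus a prefix contribution from the current block, and one needs $L_k$ large enough that (a) prior blocks have negligible relative weight, (b) prefixes of block $k$ are themselves near-generic for $\nu_k$, and (c) the weighted mixture $\frac{1}{N}\sum_{j \leq k} L_j \nu_j(C)$ stays close to $\nu(C)$ once $k$ is large. Coordinating the growth of $L_k$ with the convergence rates of the $x^{(k)}$ so that all three conditions hold simultaneously is the technical crux of the argument.
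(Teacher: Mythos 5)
Your proposal is correct in outline and rests on the same three ingredients as the paper's proof --- Krein--Milman applied to $\mathcal{M}_\sigma$ with the ergodic measures as extreme points, the pointwise ergodic theorem to produce generic points for ergodic measures, and block concatenation with dominating block lengths and $\mathcal{O}(km)$ boundary-error control --- but you package them differently. The paper first proves Lemma \ref{Lemma: Points close to periodic points}: for any $\nu$, $k$ and $\varepsilon$ there is a \emph{periodic} point $x$ (one period being a concatenation of long generic segments for ergodic measures, in proportions supplied by Krein--Milman) such that \emph{every} continuation of every sufficiently long prefix of $x$ has empirical measure $\varepsilon$-close to $\nu$ on $\mathcal{C}_k$; periodicity is what makes the control at all intermediate prefix lengths automatic, since a prefix of length $R\geq pq$ differs from a whole number of periods by at most $p$ symbols. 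Your reorganization into ``$G$ contains the ergodic measures, is convex, and is closed'' is logically equivalent and arguably cleaner, and your closedness argument --- with $(L_1+\dots+L_{k-1})/L_k\to 0$, prefixes of block $k$ near-generic beyond a threshold $R_k$, and the threshold for the next block negligible relative to $L_k$ --- is precisely the concatenation carried out in the paper's main proof.

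One caution: the convexity step is \emph{not} ``the analogous construction.'' If you alternate blocks of $x^{(1)}$ and $x^{(2)}$ with each block dominating the accumulated past, as in the closedness step, the empirical measures oscillate between a neighbourhood of $\nu_1$ and a neighbourhood of $\alpha\nu_1+(1-\alpha)\nu_2$ and do not converge. Convexity requires the opposite calibration: blocks of lengths roughly $\alpha s_j$ and $(1-\alpha)s_j$ with $s_j\to\infty$ but $s_j/(s_1+\dots+s_{j-1})\to 0$, so that at every time the running proportions are close to $\alpha:(1-\alpha)$ and the current partial block is negligible. The paper sidesteps this bifurcation by realizing the convex combination inside a single period of a periodic point, so that averaging over many periods yields the correct proportions at all intermediate times for free. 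With that correction your argument goes through.
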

In fact Sigmund proved that any measure on a compact metric space invariant under a continuous map satisfying a property known as specification (of which the shift space $(\{0,1\}^\N,\sigma)$ is an example) admits a $(\{0,\dots,N-1\})$-generic point. For the sake of completeness we include a proof of Proposition \ref{Prop: Sigmund Theorem} in Section \ref{Sec: Proof of Sigmund}.
\begin{proof}[Proof of Theorem \ref{Theorem: Inverse CP}]
Let $(X,\mu,T)$ be a measure preserving system and let $A\subset X$. By Lemma \ref{Lemma: Universality of shift space} there exists some $\nu \in \mathcal{M}_\sigma$ with
\[ \mu\left(\bigcap_{i=1}^k T^{-n_i}A\right) = \nu([\omega_{n_1}=1,\dots,\omega_{n_k}=1])\] 
for all $k,n_1,\dots,n_k\in \N$. Proposition \ref{Prop: Sigmund Theorem} ensures there exists a point $x\in \{0,1\}^\N$ for which
\[\frac{1}{N}\sum_{i=0}^{N-1} \delta_{\sigma^i(x)}([\omega_{n_1}=1,\dots,\omega_{n_k}=1]) \to \nu([\omega_{n_1}=1,\dots,\omega_{n_k}=1])\]
as $N\to \infty$ for all $k,n_1,\dots,n_k\in \N$. Identifying $x$ with the indicator function $1_E$ for some set $E\subset \N$ allows us to re-write
\[\frac{1}{N}\sum_{i=0}^{N-1} \delta_{\sigma^i(x)}([\omega_{n_1}=1,\dots,\omega_{n_k}=1]) = \frac{\left|\left( \bigcap_{i=1}^k (E-n_i) \right)\cap \{0,\dots,N-1\}\right|}{N}\]
for any $k,n_1,\dots,n_k\in \N$ and so the result follows.
\end{proof}

\section{\textbf{Proof of uniformity of recurrence and nice recurrence}}
\begin{proof}[Proof of Theorem \ref{Theorem: Uniformity of recurrence}]
If the result fails then there exists some $\delta>0$, a sequence of positive numbers $(\eta_k)$ decreasing to $0$ and a sequence of positive integers $(N_k)$ increasing to infinity such that the following holds. For every $k\in \N$ there exists a system $(X_k,\mu_k,T_k)$ and a set $A_k\subset X_k$ with $\mu_k(A_k)\geq \delta$ such that
\begin{equation*}
\mu_k(A_k \cap T_k^{-r} A_k) < \eta_k \quad \text{for all }r \in R\cap\{1,\dots,N_k\}.
\end{equation*}
Applying Lemma \ref{Lemma: Universality of shift space} to each $(X_k \supset A_k,\mu_k,T_k)$ yields a sequence of $\sigma$-invariant measures $(\nu_k)$ such that the set $B:=[\omega_0=1]$ has $\nu_k(B)=\mu_k(A_k)\geq \delta$ and 
\begin{equation*}
\nu_k(B \cap \sigma^{-r} B) < \eta_k \quad \text{for all }r \in R\cap\{1,\dots,N_k\}
\end{equation*}
and all $k\in \N$. By compactness of $\mathcal{M}$ there exists some weak-* limit $\nu$ of the $\nu_k$'s with $\nu(B)\geq \delta$, since $B$ is itself a cylinder, and such that
\[ \nu(B\cap T^{-r}B)=0 \quad \text{for every non-zero }r\in R.\]
Since the cylinders generate the Borel $\sigma$-algebra on $\{0,1\}^\N$, convergence of cylinders ensures that the weak-* limit of $\sigma$-invariant measures is $\sigma$-invariant, and so the previous equation contradicts that $R$ is a set of recurrence. 
\end{proof}
\begin{proof}[Proof of Theorem \ref{Theorem: Uniformity of nice recurrence}]
If the result fails then there exists some $\delta,\varepsilon>0$ with $\delta^2> \varepsilon$ and for every $k\in \N$ a system $(X_k,\mu_k,T_k)$ with a set $A_k\subset X_k$ satisfying $\mu_k(A_k)\geq \delta$ for which
\[ \mu_k(A_k \cap T_k^{-r} A_k) \leq \delta^2 -\varepsilon \quad \text{for all } r \in R\cap \{1,\dots,k\}.\]
Proceeding identically to the proof of Theorem \ref{Theorem: Uniformity of recurrence} then we obtain a measure $\nu \in \mathcal{M}_\sigma$ and a set $B\subset \{0,1\}^\N$ with $\nu(B)\geq \delta$ such that
\[ \nu(B\cap \sigma^{-r}B)\leq \delta^2 - \varepsilon \quad \text{for all non-zero }r\in R\]
contradicting that $R$ is a set of nice recurrence.
\end{proof}

\section{\textbf{Existence of }$(\{0,\dots,N-1\})$\textbf{-generic points for shift invariant measures}}\label{Sec: Proof of Sigmund}
Let $\mathcal{M}_\sigma^e \subset \mathcal{M}_\sigma$ be the set of all ergodic $\sigma$-invariant measures. We define the domain of a cylinder $[\omega_{n_1} = i_1,\dots,\omega_{n_k}=i_k]$ to be the set
\[\mathrm{dom}([\omega_{n_1} = i_1,\dots,\omega_{n_k}=i_k]):=\{n_1,\dots,n_k\},\]
and denote by $\mathcal{C}_N$ to be the collection of all cylinders $C$ with $\mathrm{dom}(C)\subset \{0,1,\dots,N-1\}$.

\begin{lem}\label{Lemma: Points close to periodic points}
Let $\nu \in \mathcal{M}_\sigma$. For every $k\in \N$ and every $\varepsilon>0$ there exists a point $x\in \{0,1\}^\N$ and some positive integer $R_0=R(k,\varepsilon,\nu)$ such that for all $R\geq R_0$ any $y \in \{0,1\}^\N$ starting with the word $x_0 x_1 \dots x_{R-1}$ we have that
\begin{equation}
\left| \frac{1}{R}\sum_{i=0}^{R-1} \delta_{\sigma^i(y)}(C) - \nu(C) \right| < \varepsilon \quad \text{for all } C \in \mathcal{C}_k.
\end{equation}
\end{lem}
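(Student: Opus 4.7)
My plan is to combine the ergodic decomposition of $\nu$ with Birkhoff's ergodic theorem, approximating $\nu$ on the finite family $\mathcal{C}_k$ by a convex combination of ergodic measures and realising this combination by concatenating initial segments of their generic points into a (periodic) sequence $x$.

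First, a routine reduction replaces $y$ by $x$ in the conclusion. Each $C \in \mathcal{C}_k$ depends only on coordinates in $\{0,\dots,k-1\}$, so $1_C(\sigma^i y) = 1_C(\sigma^i x)$ whenever $i \leq R-k$. Thus if $y$ starts with $x_0 \dots x_{R-1}$, the two partial sums differ by at most $k$, and the two averages by at most $k/R$. Taking $R_0 \geq 2k/\varepsilon$ absorbs this into $\varepsilon/2$, so it suffices to prove the conclusion for $y = x$.

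To handle the possible non-ergodicity of $\nu$, I write $\nu = \int_{\mathcal{M}_\sigma^e}\mu\,d\tau(\mu)$ via the ergodic decomposition. Since $\mathcal{C}_k$ is finite, the profile $\mu \mapsto (\mu(C))_{C\in\mathcal{C}_k}$ takes values in a bounded cube in $\R^{|\mathcal{C}_k|}$, so I partition $\mathcal{M}_\sigma^e$ into Borel pieces $E_1,\dots,E_m$ on which this profile oscillates by less than $\varepsilon/8$. Setting $\alpha_j = \tau(E_j)$ and choosing any $\mu_j \in E_j$ yields $|\nu(C) - \sum_j \alpha_j \mu_j(C)| < \varepsilon/8$ for every $C \in \mathcal{C}_k$. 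Each $\mu_j$ being ergodic, Birkhoff's theorem produces a point $y^{(j)}\in\{0,1\}^\N$ whose Ces\`{a}ro averages converge to $\mu_j$; pick a common threshold $L_0$ past which $|\frac{1}{L}\sum_{i=0}^{L-1}1_C(\sigma^i y^{(j)}) - \mu_j(C)| < \varepsilon/8$ for all pairs $(j,C)$.

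Now fix a large integer $M$ (to be chosen) and let $W$ be the concatenation, in order, of the first $\lfloor M\alpha_j\rfloor$ symbols of $y^{(j)}$ for $j = 1,\dots,m$; set $x = W W W \dots$. For $R \gg |W|$, the average $\frac{1}{R}\sum_{i=0}^{R-1}1_C(\sigma^i x)$ is within $O(|W|/R)$ of the average over one full period of $x$, which in turn splits into $m$ sub-block contributions close to $M\alpha_j \mu_j(C)$ (by the choice of $L_0$) plus boundary discrepancies of at most $k$ at each joint between consecutive sub-blocks. The main technical obstacle is the parameter balancing at these joints: the $m$ boundaries per period contribute $O(km)$ total error, so one must pick $M \gg km/\varepsilon$ to make this negligible compared with $\varepsilon|W|$, and then $R_0 \gg |W|/\varepsilon$ to swallow the partial-period contribution at the right end of the averaging window. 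Combining the ergodic-decomposition error, the genericity error, the boundary error, and the earlier $k/R$ reduction gives a total discrepancy below $\varepsilon$, as required.
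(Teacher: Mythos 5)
Your proof is correct and follows essentially the same strategy as the paper's: approximate $\nu$ on the finite family $\mathcal{C}_k$ by a finite convex combination of ergodic measures, concatenate proportionally long initial segments of Birkhoff-generic points for those measures into a periodic word, and absorb the concatenation-boundary and partial-period errors by taking the blocks and $R_0$ long enough. The only cosmetic difference is that you produce the convex combination via the ergodic decomposition plus a discretisation of the profile map, whereas the paper invokes Krein--Milman to get density of $\mathrm{conv}(\mathcal{M}_\sigma^e)$ in $\mathcal{M}_\sigma$; the two are interchangeable here (just make sure $M$ is also large enough that each block length $\lfloor M\alpha_j\rfloor$ exceeds your threshold $L_0$, the analogue of the paper's condition $n_i\geq\tilde{n}_i$).
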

\begin{proof}
Let $\nu \in \mathcal{M}_\sigma$, $k\in \N$ and $\varepsilon>0$ be given. The Krein-Milman theorem implies that
\[ \overline{\mathrm{conv}(\mathcal{M}_\sigma^e)} = \mathcal{M}_\sigma\]
where $\mathrm{conv}(\mathcal{M}_\sigma^e)$ is the set of all finite convex combinations of ergodic $\sigma$-invariant measures. By finiteness of $\mathcal{C}_k$ it follows that we can find some convex weights $\alpha_1,\dots,\alpha_r \in [0,1]$ and measures $\nu_1,\dots,\nu_r \in \mathcal{M}_\sigma^e$ such that
\begin{equation}\label{eq: 1}
\left| \nu(C) - \sum_{i=1}^r \alpha_i \nu_i(C)\right| < \frac{\varepsilon}{5} \quad \text{for all }C \in \mathcal{C}_k.
\end{equation}
For each $i=1,\dots,r$ use the pointwise ergodic theorem to find a $\nu_i$-generic point $x^i \in \{0,1\}^\N$, i.e. a point for which the measure
\[\delta_n(x^i):= \frac{1}{n}\sum_{j=0}^{n-1}\delta_{\sigma^j (x^i)}\quad \text{has that}  \quad \delta_n(x^i) \to \nu_i \text{ as }n\to \infty.\]
Finiteness of $\mathcal{C}_k$ then ensures that for each $i=1,\dots,r$ there exists some $\tilde{n}_i \in \N$ such that
\begin{equation}\label{eq: 2}
\left|\nu_i(C) - \delta_n(x^i)(C) \right| < \frac{\varepsilon}{5}\quad \text{for all }C \in \mathcal{C}_k \text{ and } n \geq \tilde{n}_i. 
\end{equation}
Now pick some $n_1,\dots,n_r$ satisfying
\begin{equation}\label{eq: 3}
n_i\geq \tilde{n}_i \quad \text{and}\quad  \left| \frac{n_i}{\sum_{j=1}^r n_j}-\alpha_i \right| < \frac{\varepsilon}{5r} \quad \text{for each }i=1, \dots ,r, 
\end{equation}
and such that
\begin{equation}\label{eq: 4}
\frac{rk}{\sum_{j=1}^r n_j} < \frac{\varepsilon}{5}.
\end{equation}
For a point $z\in \{0,1\}^\N$, denote by $z|_{n} := z_0 z_1 \dots z_{n-1}$ the word formed by the first $n$ symbols of $z$ and let $x \in \{0,1\}^\N$ be the point obtained by repeating the finite string
\[x^1|_{n_1}\bullet x^2|_{n_2} \bullet \dots \bullet x^r|_{n_r}\]
where $\bullet$ denotes concatenation. Thus $x$ is a periodic point with period $p: = \sum_{j=1}^r n_j$.
Notice that for any $C\in \mathcal{C}_k$,
\begin{align*}
\delta_{p}(x)(C) &= \frac{1}{p}\sum_{i=1}^r \left[ n_i  \delta_{n_i}(x^i)(C)+e_i  \right] \\
&= \sum_{i=1}^r \frac{n_i}{p}\delta_{n_i}(x^i)(C) + \frac{1}{p}\sum_{i=1}^r e_i.
\end{align*}
where each $e_i$ is an error term resulting from the $i^\text{th}$ concatenation boundary. Since each $C\in \mathcal{C}_k$ has $\mathrm{dom}(C)\subset\{0,1,\dots,k-1\}$ then each $|e_i| \leq k$, and so by equation \eqref{eq: 4} we have that
\begin{equation}\label{eq: 5}
\left| \delta_p(x)(C) - \sum_{i=1}^r  \frac{n_i}{p}  \delta_{n_i}(x^i)(C) \right| <\frac{\varepsilon}{5}\quad \text{for all } C \in \mathcal{C}_k. 
\end{equation}
Since each $n_i \geq \tilde{n}_i$ then we may combine our estimates from equations \eqref{eq: 1}, \eqref{eq: 2}, \eqref{eq: 3} and \eqref{eq: 5} to conclude that
\begin{equation} \label{eq: 6}
\left| \delta_p(x)(C) - \nu(C)\right| < \frac{4\varepsilon}{5} \quad \text{for all } C \in \mathcal{C}_k.
\end{equation}
Since $x$ is periodic with period $p$ then $\delta_p(x) = \delta_{qp}(x)$ for any positive integer $q$. Picking $R_0 = pq$ for $q$ sufficiently large in terms of $p,\varepsilon$ and $k$ will then ensure that for any $R\geq R_0$ and any point $y\in \{0,1\}^\N$ starting with the word $x_0 x_1 \dots x_{R-1}$ will have that 
\[\left| \delta_{R}(y)(C) - \delta_{p}(x)(C) \right| < \frac{\varepsilon}{5} \quad \text{for all } C \in \mathcal{C}_k,\]
which combined with equation \eqref{eq: 6} completes the proof.
\end{proof}
Notice that the proof of Lemma \ref{Lemma: Points close to periodic points} in particular implies periodic measures, i.e. those giving equal mass to each point in finite $\sigma$-orbit of a periodic point, are dense in $\mathcal{M}_\sigma$. The proof of Proposition \ref{Prop: Sigmund Theorem} proceeds by concatenating larger and larger pieces of these periodic points in the correct proportion.
\begin{proof}[Proof of Proposition \ref{Prop: Sigmund Theorem}]
Let $\nu \in \mathcal{M}_\sigma$ and use Lemma \ref{Lemma: Points close to periodic points} to obtain for each $k\in \N$ a point $x^k \in \{0,1\}^\N$ and a positive integer $R_k$ such that for every $R\geq R_k$ and every $y\in \{0,1\}^\N$ with $y|_{R} = x^k|_R$ we have that
\begin{equation}\label{eq: points from the lemma}
\left| \delta_R(y)(C) - \nu(C)\right| < \frac{1}{k} \quad \text{for all }C \in \mathcal{C}_k.
\end{equation}
Now let $L_1 := R_1$ and for $j\geq 2$ inductively pick some integer $L_j$ such that
\begin{align} 
&L_j\geq R_j, \label{eq: (i)} \\
&\frac{R_{j+1}}{L_j} < \frac{1}{j}, \text{ and} \label{eq: (ii)}\\
&\frac{L_j}{L_1+L_2+\dots+L_j}> 1 - \frac{1}{j}.\label{eq: (iii)}
\end{align}
We claim that the point
\[ y:= x^1|_{L_1} \bullet x^2|_{L_2} \bullet x^3|_{L_3} \dots \]
has that $\delta_{N}(y)\to \nu$ as $N \to \infty$. Indeed for any $N$ let $k=k(N)$ be such that
\[ L_1+\dots+L_k \leq N < L_1+\dots+L_{k+1}\]
and write $N= L_1+\dots+L_k+L$. Notice that we can then split $\delta_N(y)$ into three terms which we will refer to as first, middle and last for the remainder of the proof. The three terms are
\[ \delta_N(y) = \frac{1}{N} \sum_{i=1}^{k-1}L_i \delta_{L_i}(y^i) + \frac{L_k}{N} \delta_{L_k}(y^k) + \frac{L}{N} \delta_L(y^{k+1})\]
where $y^i\in \{0,1\}^\N$ has $y^i|_{L_i} = x^i|_{L_i}$ for each $i=1,\dots,k$ and $y^{k+1}|L = x^{k+1}|_{L}$. Equation \eqref{eq: (iii)} always ensures that the first term is $\mathcal{O}(1/k)$. If $L < R_{k+1}$ then equations \eqref{eq: (ii)} and \eqref{eq: (iii)} ensure both that the last term is $\mathcal{O}(1/k)$ and that $L_k/N = 1+\mathcal{O}(1/k)$. This observation combined with equations \eqref{eq: (i)} and \eqref{eq: points from the lemma} ensure that for every cylinder $C \in \mathcal{C}_k$ the middle term has that
\begin{align*}
\frac{L_k}{N} \delta_{L_k}(y^k)(C) &= \left( 1+ \mathcal{O}(1/k)\right)\left( \nu(C) + \mathcal{O}(1/k) \right)\\
&=\nu(C) + \mathcal{O}(1/k).
\end{align*}
If instead $L\geq R_{k+1}$ then the last term may not be negligible but instead equation \eqref{eq: points from the lemma} applies to both the middle and the final term to see that for every cylinder $C\in \mathcal{C}_k$ we have that
\begin{align*}
\frac{L_k}{N} \delta_{L_k}(y^k)(C) + \frac{L}{N} \delta_L(y^{k+1})(C) &= 
\frac{L_k+L}{N} \left(\nu(C) + \mathcal{O}(1/k)\right) \\
&= \left(1+\mathcal{O}(1/k)\right)\nu(C) + \mathcal{O}(1/k)
\end{align*}
where in the last line we use equation \eqref{eq: (iii)}. In any case then we have that
\[ \delta_{N}(y)(C) = \nu(C)+\mathcal{O}(1/k)\quad \text{for all } C\in\mathcal{C}_k.\]
Since $k=k(N)\to \infty$ as $N\to \infty$ then for any fixed cylinder $C$ this implies that
\[\delta_N(y)(C) \to \nu(C) \quad \text{as}\quad N\to \infty\]
as required.
\end{proof}

\section{\textbf{Further questions}}
As stated our inverse correspondence principle only applies to the F{\o}lner sequence $(F_N)=(\{0,1,\dots,N-1\})$, and so Questions \ref{Question: Characterising nice recurrence} and \ref{Question: Inverse CP} both remain open in their full generality. Although the construction in the proof of Proposition \ref{Prop: Sigmund Theorem} is flexible enough to also cover the cases when $(F_N)$ is an increasing sequence of nested intervals or when $(F_N)$ is a sequence of disjoint intervals, the general case when $(F_N)$ is an arbitrary F{\o}lner sequence remains unclear. Indeed we do not even know how to handle the case when
\[ F_N = \{1,\dots,n_{N}\} + t_N\]
for some increasing sequence $(n_N)$ and some shifts $t_N \in \N$.

In a slightly different direction, if in Definition \ref{Def: Recurrence} we replace the words "for any measure preserving system" with "for any \textit{ergodic} measure preserving system", then we obtain the a-priori weaker definition of a set of \textit{ergodic recurrence}. In the exact same way one can modify Definition \ref{Def: Nice Recurrence} to obtain the definition of a set of \textit{ergodic nice recurrence}. It is known that being a set of ergodic recurrence is in fact equivalent to being a set of recurrence, indeed one proof follows from Theorem \ref{Theorem: Uniformity of recurrence} and an appeal to the ergodic decomposition. Whether or not the same equivalence holds for nice recurrence is another question of Moreira \cite{Moreira}.
\begin{qu}\label{Question: ergodic nice recurrence}
Does there exist a set of ergodic nice recurrence which is not a set of nice recurrence?
\end{qu}
Contrastingly to all other similarities between sets of recurrence and sets of nice recurrence, both proved and conjectured, we expect that the answer to Question \ref{Question: ergodic nice recurrence} is yes because in the setting of multiple recurrence the analogous version of Question \ref{Question: ergodic nice recurrence} is known to have an affirmative answer. Indeed the authors of \cite{BHKR} showed that if $(X,\mu,T)$ is an ergodic measure preserving system and $A\subset X$ has positive measure, then for any $\varepsilon>0$ the set
\[ R_{3}(A,\varepsilon): = \{n\in \N \, : \, \mu(A\cap T^{-n}A \cap T^{-2n})>\mu(A)^3-\varepsilon\} \]
is syndetic and in particular non-empty, however they also constructed an example of a non-ergodic system $(X,\mu,T)$ with a positive measure set $A\subset X$ for which $R_3(A,\varepsilon)$ is empty. In the setting of single recurrence however, Question \ref{Question: ergodic nice recurrence} remains open.

\end{document}